\newcommand{\R}{\mathbb R}
\newcommand{\F}{\mathcal F}
\newcommand{\co}{\mathfrak{c}}
\newtheorem{theorem}{Theorem}
\newtheorem{df}{Definition}
\newtheorem{lemma}{Lemma}
\newtheorem{problem}{Problem}
\newtheorem{proposition}{Proposition}
\begin{document}

\title{Fuzzy integrals on compacta based on pseudo-grouping  and overlap functions}


\author{Taras Radul}

\maketitle

Institute of Mathematics, Kazimierz Wielki University in Bydgoszcz, Poland;
\newline
Department of Mechanics and Mathematics, Ivan Franko National University of Lviv,
Universytettska st., 1. 79000 Lviv, Ukraine.
\newline
e-mail: tarasradul@yahoo.co.uk

\textbf{Key words and phrases:}  Capacity,   fuzzy integral, overlap, general pseudo-grouping

\subjclass[MSC 2020]{ 28E10}

\begin{abstract} We consider a wide family of fuzzy integrals on arbitrary compactum  which generalize well know Sugeno integral. Such generalization is obtained using some non-discrete analogs of pseudo-grouping  functions.

 \end{abstract}

\maketitle

\section{Introduction}

Capacities (non-additive measures, fuzzy measures) were introduced by Choquet in \cite{Ch} as a natural generalization of additive measures. They found numerous applications (see for example \cite{EK},\cite{Gil},\cite{Sch}). Capacities on compacta were considered in \cite{Lin} where the important role plays the upper-semicontinuity property which  connects the capacity theory with the topological structure.

In fact, the most of applications of non-additive measures to game theory, decision making theory, economics etc deal not with measures as set functions  but with integrals which allow to obtain expected utility or expected pay-off.  Several types of integrals with respect to non-additive measures were developed for different purposes (see for example \cite{Grab}, \cite{KM}, \cite{LMOS}, \cite{Den}). Such integrals are called fuzzy integrals. The most known are the Choquet integral based on the addition and the multiplication operations \cite{Ch} and the Sugeno integral  based on the maximum and the minimum operations \cite{Su}.

Important types of aggregation functions have been researched in recent years, including generalizations of Sugeno and Choquet integrals. Such generalizations are based on changing above mentioned basic operations for more general classes of aggregations functions like fusion functions,   overlap functions, t-norms, t-conorms, grouping functions etc (see for example \cite{BK}, \cite{LMOS}, \cite{KM},   \cite{Sua}, \cite{We2}).  Many of these generalizations consider only discrete case: integrals of functions with finite domain. Thus, aggregation function under such approach are n-ary operations. We consider some  generalizations of the Sugeno integral for any compactum in this paper. We use specific classes of  aggregation functions called t-overlap and
$\co$-general pseudo-grouping functions.

\section {Preliminaries. Capacities   and t-normed integrals}

In what follows, all spaces are assumed to be compacta (compact Hausdorff space) except for $\R$ and the spaces of continuous functions on a compactum. All maps are assumed to be continuous. By $\F(X)$ we denote the family of all closed subsets of a compactum $X$.

We shall denote the
Banach space of continuous functions on a compactum  $X$ endowed with the sup-norm by $C(X)$. For any $c\in\R$ we shall denote the
constant function on $X$ taking the value $c$ by $c_X$. We also consider the natural lattice operations $\vee$ and $\wedge$ (on $C(X)$ and  its sublattice $C(X,[0,1])$).

We need the definition of capacity on a compactum $X$. We follow a terminology of \cite{NZ}.

\begin{df} A function $\nu:\F(X)\to [0,1]$  is called an {\it upper-semicontinuous capacity} on $X$ if the three following properties hold for each closed subsets $F$ and $G$ of $X$:

1. $\nu(X)=1$, $\nu(\emptyset)=0$,

2. if $F\subset G$, then $\nu(F)\le \nu(G)$,

3. if $\nu(F)<a$ for $a\in[0,1]$, then there exists an open set $O\supset F$ such that $\nu(B)<a$ for each compactum $B\subset O$.
\end{df}

By $MX$ we denote the set  of all upper-semicontinuous  capacities on a compactum $X$. Since all capacities we consider here are upper-semicontinuous, in the following we call elements of the set $MX$ simply capacities.

 Remind that a triangular norm $\ast$ is a binary operation on the closed unit interval $[0,1]$ which is associative, commutative, monotone and $s\ast 1=s$ for each  $s\in [0,1]$ \cite{PRP}. 
 We consider only continuous t-norms in this paper.

Integrals generated  by  t-norms are called t-normed integrals and were studied in \cite{We1}, \cite{We2} and \cite{Sua}. Denote $\varphi_t=\varphi^{-1}([t,1])$ for each $\varphi\in C(X,[0,1])$ and $t\in[0,1]$. So, for a continuous t-norm $\ast$, a capacity $\mu$ and a  function $f\in  C(X,[0,1])$ the corresponding t-normed integral is defined by the formula $$\int_X^{\vee\ast} fd\mu=\max\{\mu(f_t)\ast t\mid t\in[0,1]\}.$$ Let us remark that putting $\ast=\wedge$, we obtain the definition of the Sugeno integral.

\section {Overlap and general pseudo-grouping functions} In this section, we consider some modification of   general pseudo-grouping functions  and some generalization of the  binary overlap function.

\begin{df}\label{Ov} A continuous map $O:[0,1]^2\to [0,1]$ is an t-overlap function if it satisfies the following conditions:

1. $O$ is symmetric;

2. $O(l,s)=0$ if $ls=0$;

3. $O(l,s)=1$ iff $ls=1$;

4. $O$ is nondecreasing.

\end{df}

The minimum operation is one of the examples of  t-overlap function. Let us remark that the previous definition is a slight generalization of the overlap function from \cite{Overlap}.  The only difference is that the overlap function $o:[0,1]^2\to [0,1]$ satisfies $o(l,s)=0$ if and only if $ls=0$. Let us remark that not each continuous  t-norm satisfies that condition. But each continuous  t-norm is an t-overlap function.

The definition of the general pseudo-grouping function was given in \cite{OverGPG} as $n$-ary operation on the unit interval  and was used to define some discrete fuzzy integrals.

\begin{df}\cite{OverGPG}\label{GPGn} A continuous map $G:[0,1]^n\to [0,1]$ is a general pseudo-grouping function if it satisfies the following conditions:

1. $G(\{x_1,\dots x_n\})=0$, if $x_i=0$ for each $i$;

2. $G(\{x_1,\dots x_n\})=1$, if there exists $i$ such that $x_i=1$;

3. $G$ is nondecreasing in each variable.

\end{df}

The maximum operation is one of the examples of general pseudo-grouping function. But we will consider generalization of Sugeno integral on compacta and $n$-arity of  general pseudo-grouping function is not enough for this purpose. Let us remark that the maximum operation was considered in more general context in the definition of t-normed integral given in previous Section.

Let $X$ be a compactum. A function $f:X\to\R$ is called upper semi-continuous if $f^{-1}((-\infty,b))$ is open for each $b\in\R$.  It is well known that upper semi-continuous functions  reach its maximum on compacta.

\begin{lemma}\label{semi} Let $X$ be a compactum, $\mu\in MX$, $f\in C(X,[0,1])$  and $O:[0,1]^2\to [0,1]$ is a continuous nondecreasing function. Then the function  $m:[0,1]\to[0,1]$ defined by the formula $m(t)=O(\mu(f_t),t)$ is upper semi-continuous.
\end{lemma}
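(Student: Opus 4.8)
The plan is to factor $m$ through the continuous map $O$: write $m(t)=O(g(t),t)$ where $g(t):=\mu(f_t)$, show that $g$ is a nonincreasing \emph{upper semi-continuous} function of $t$, and then push upper semi-continuity through the composition using the monotonicity of $O$.

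First I would record the elementary monotonicity facts. For $0\le s\le t\le 1$ we have $f_t=\{x:f(x)\ge t\}\subseteq f_s$, so by monotonicity of the capacity the function $g(t)=\mu(f_t)$ is nonincreasing on $[0,1]$. In particular all one-sided limits of $g$ exist and $g(t^-)\ge g(t)\ge g(t^+)$, so for $t\in(0,1]$ one has $\limsup_{s\to t}g(s)=g(t^-)$; hence $g$ is upper semi-continuous at $t$ exactly when it is left-continuous there, while at $t=0$ upper semi-continuity is automatic since $g\le g(0)=\mu(X)=1$. To prove left-continuity at a point $t_0\in(0,1]$, pick $t_n\nearrow t_0$ with $t_n<t_0$. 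Since $f$ is continuous each $f_{t_n}$ is closed, the sequence $(f_{t_n})$ is decreasing, and $\bigcap_n f_{t_n}=\{x:f(x)\ge\sup_n t_n\}=f_{t_0}$. The numbers $g(t_n)$ decrease to some $L\ge g(t_0)$; suppose $L>g(t_0)$ and choose $a$ with $g(t_0)<a<L$. Applying the upper-semicontinuity property of $\mu$ to the closed set $f_{t_0}$ yields an open set $U\supseteq f_{t_0}$ with $\mu(B)<a$ for every compactum $B\subseteq U$. Then $(f_{t_n}\setminus U)_n$ is a decreasing sequence of compacta with empty intersection, so by compactness $f_{t_N}\setminus U=\emptyset$ for some $N$, i.e.\ $f_{t_N}\subseteq U$; but then $g(t_N)=\mu(f_{t_N})<a$, contradicting $g(t_N)\ge L>a$. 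Hence $L=g(t_0)$, $g$ is left-continuous, and therefore upper semi-continuous on $[0,1]$.

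It remains to deduce upper semi-continuity of $m$. Fix $b\in\R$ and take a sequence $(t_n)$ in $\{t:m(t)\ge b\}$ with $t_n\to t_0$; I must check $m(t_0)\ge b$. Since $[0,1]$ is compact we may pass to a subsequence along which $g(t_n)$ converges, say to $\alpha$; upper semi-continuity of $g$ gives $\alpha=\lim_n g(t_n)\le\limsup_{s\to t_0}g(s)\le g(t_0)$. By continuity of $O$ we get $b\le O(g(t_n),t_n)\to O(\alpha,t_0)$, hence $O(\alpha,t_0)\ge b$, and since $O$ is nondecreasing and $\alpha\le g(t_0)$, $O(\alpha,t_0)\le O(g(t_0),t_0)=m(t_0)$. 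Thus $m(t_0)\ge b$, so $\{t:m(t)\ge b\}$ is closed for every $b\in\R$, which is exactly upper semi-continuity of $m$.

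The only delicate step is the left-continuity of $t\mapsto\mu(f_t)$, and this is precisely where the upper-semicontinuity axiom of the capacity (rather than mere monotonicity) is needed: it is used through the standard device of trapping the tail of a decreasing sequence of compacta inside a prescribed neighbourhood of their intersection. Everything else — the monotonicity bookkeeping and pushing upper semi-continuity through the continuous nondecreasing $O$ — is soft.
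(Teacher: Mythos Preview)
Your proof is correct and uses essentially the same ingredients as the paper's: the upper-semicontinuity of the capacity together with the compactness argument $\bigcap_{z<t}f_z=f_t$ to control $\mu(f_z)$ near $t$, and the monotonicity and continuity of $O$ to pass to $m$. The only difference is organizational---you isolate the upper semi-continuity of $g(t)=\mu(f_t)$ as a separate step and then push it through $O$ via a sequential closedness argument, whereas the paper folds everything into a single direct verification that $\{t:m(t)<s\}$ is open.
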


\begin{proof} Consider any $s>0$ and $t\in[0,1]$ such that $O(\mu(f_t),t)<s$. Since the function $O$ is continuous and  nondecreasing, there exist $\varepsilon>0$ such that for each $p$, $l\in [0, \mu(f_t)+\varepsilon)\times[0,t+\varepsilon)$ we have $O(p,l)<s$. Since $\mu$ is upper semi-continuous, there exists a neighborhood $V$ of $f_t$ such that for each closed set $A\subset V$ we have $\mu(A)<\mu(f_t)+\varepsilon$. Since $f$ is continuous and $X$ is compact, there exists $\delta>0$ such that $f_z\subset V$ for each $z\in (t-\delta,t+\delta)$. Put $\beta=\min\{\varepsilon,\delta\}$. Then we have $m(q)<s$ for each $q\in(t-\beta,t+\beta)$. 
\end{proof}

The previous lemma indicates how we should generalize the notion of general pseudo-grouping function.

For a compactum $X$ by $\exp X$ we denote the set of non-void
compact subsets of $X$ provided with the Vietoris topology. A subbase
of this topology consists of the sets of the form
$\langle U\rangle=\{A\in \exp\ X\mid A\subset U\}$ and $\langle X,V\rangle=\{A\in \exp\ X\mid
A\cap V\ne\emptyset$  where $U$,
$V$ are open in $X$. The space $\exp\ X$ is called the
hyperspace of $X$.

For a compactum $X$ by $UC(X,[0,1])$ we denote the set of all upper semi-continuous functions from $X$ to $[0,1]$. It is well known that $f:X\to[0,1]$ is u.s.c.
on X if and only if its hypograph, the set $$\Gamma(f)= \{(x, a)\mid x\in X \text{ and }
a\le f(x)\}$$ is a closed subset of $X\times [0,1]$. Thus, if we identify an u.s.c.
function with its hypograph, then the set of all u.s.c. functions
$UC(X)$ on $X$ can be viewed as a subspace of $\exp (X\times [0,1])$.  

\begin{df}\label{CGPG}  A continuous map $G:UC([0,1],[0,1])\to [0,1]$ is a $\co$-general pseudo-grouping function if it satisfies the following conditions:

1. $G(0_{[0,1]})=0$;

2. $G(\varphi)=1$  for each $\varphi\in UC([0,1],[0,1])$ such that  there exists $t\in [0,1]$ such that $\varphi(t)=1$;

3. $G$ is nondecreasing, i.e $G(\psi)\le G(\varphi)$ for each $\psi$, $\varphi\in UC([0,1],[0,1])$ such that $\psi\le \varphi$.

\end{df}

Let us check that the notion of $\co$-general pseudo-grouping function generalizes the maximum operation.

\begin{proposition} The map $\max:UC([0,1],[0,1])\to [0,1]$ is a $\co$-general pseudo-grouping function.
\end{proposition}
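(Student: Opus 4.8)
The plan is to verify the three defining conditions of Definition~\ref{CGPG} for the map $\max\colon UC([0,1],[0,1])\to[0,1]$, treating continuity as the substantive point and the other two conditions as immediate. Conditions~1 and~2 are essentially by inspection: $\max(0_{[0,1]})=\sup_{t\in[0,1]}0=0$, and if $\varphi\in UC([0,1],[0,1])$ attains the value $1$ at some $t$, then $\max\varphi=\sup_{s}\varphi(s)=1$ (using that $[0,1]$ is compact and $\varphi$ is u.s.c., so the supremum is attained and equals $1$). Monotonicity is equally direct: if $\psi\le\varphi$ pointwise then $\sup\psi\le\sup\varphi$, i.e. $\max(\psi)\le\max(\varphi)$.

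The real work is continuity of $\max$ with respect to the Vietoris topology on $UC([0,1],[0,1])\subset\exp([0,1]\times[0,1])$, where an u.s.c. function is identified with its hypograph $\Gamma(\varphi)$. So I would fix $\varphi$ and $\e>0$ and produce a Vietoris-open neighbourhood $\mathcal{W}$ of $\Gamma(\varphi)$ such that $\psi\in\mathcal{W}$ implies $|\max\psi-\max\varphi|<\e$. Write $a=\max\varphi$ and pick a point $t_0$ with $\varphi(t_0)=a$. For the lower bound $\max\psi>a-\e$: the point $(t_0,a-\e/2)$ lies in the interior of $\Gamma(\varphi)$ relative to $[0,1]^2$ is not quite right since hypographs need not have interior, but $(t_0,a-\e/2)$ does have a product neighbourhood $U\times V$ meeting $\Gamma(\varphi)$, and using the subbasic Vietoris set $\langle [0,1]\times[0,1], U\times V\rangle$ forces any nearby $\psi$ to have a hypograph point with second coordinate close to $a-\e/2$, hence $\max\psi\ge a-\e$ (shrinking $V$ appropriately). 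For the upper bound $\max\psi<a+\e$: since $\Gamma(\varphi)\subset[0,1]\times[0,a]$, and $[0,1]\times[0,a+\e/2)$ is open, the subbasic set $\langle [0,1]\times[0,a+\e/2)\rangle$ is a Vietoris neighbourhood of $\Gamma(\varphi)$, and any $\psi$ in it has $\Gamma(\psi)\subset[0,1]\times[0,a+\e/2)$, so $\max\psi\le a+\e/2<a+\e$. Intersecting the two neighbourhoods gives the required $\mathcal{W}$.

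The step I expect to be the main obstacle is the lower-bound direction, because it requires care about what ``$\psi$ is Vietoris-close to $\Gamma(\varphi)$'' actually gives: membership in $\langle [0,1]^2, W\rangle$ only guarantees that $\Gamma(\psi)$ meets the open set $W$, so I must choose $W$ as a small box around $(t_0,a-\e/2)$ that lies below the graph of $\varphi$ in the sense that its points are genuinely in the hypograph, and then deduce that a hypograph point of $\psi$ in that box forces $\max\psi$ to exceed $a-\e$. One must also be slightly careful that $a-\e/2$ could be negative or that $a$ could be $0$; in the degenerate edge cases ($a=0$) the lower bound is trivial since values are nonnegative, so only $a>0$ needs the box argument, and one takes $\e$ small enough that $a-\e>0$ or simply replaces $a-\e$ by $\max\{0,a-\e\}$ throughout. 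Once these neighbourhoods are in hand, continuity at $\varphi$ follows, and since $\varphi$ was arbitrary, $\max$ is continuous, completing the verification that it is a $\co$-general pseudo-grouping function.
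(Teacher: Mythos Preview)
Your proposal is correct and follows essentially the same approach as the paper: dispose of Conditions~1--3 by inspection and establish continuity by exhibiting, for $a=\max\varphi$ and $\e>0$, a Vietoris neighbourhood of $\Gamma(\varphi)$ built from the two subbasic sets controlling the upper and lower bounds on $\max\psi$. The only difference is that for the lower bound the paper uses the simpler strip $\langle [0,1]^2,\ [0,1]\times(a-\e,1]\rangle$ rather than a small box around $(t_0,a-\e/2)$, which avoids introducing $t_0$ at all; your version works just as well.
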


\begin{proof} Obviously, $\max$ satisfies all three properties from the definition of $\co$-general pseudo-grouping function. We have to prove continuity.

Consider any function $f\in UC([0,1],[0,1])$ and put $a=\max f\in [0,1]$. Take any $\varepsilon>0$. We have $\Gamma(f)\in\langle [0,1]\times[0,a+\varepsilon)\rangle\cap\langle X, [0,1]\times(a-\varepsilon,1]\rangle$ and for each  $g\in UC([0,1],[0,1])$ with $\Gamma(f)\in\langle [0,1]\times[0,a+\varepsilon)\rangle\cap\langle X, [0,1]\times(a-\varepsilon,1]\rangle$ we have $|a-\max g|<\varepsilon$.

\end{proof}





\section {Definition of a family of fuzzy integrals on any compactum}

 We consider a generalization of t-normed integral for any compactum $X$ based on  the t-overlap and $\co$-general pseudo-grouping functions in this section. Consider any t-overlap function $O$ and any $\co$-general pseudo-grouping function $G$. Let $X$ be a compactum and $\mu\in MX$.

\begin{df} For a  function $f\in  C(X,[0,1])$ we consider an integral defined by the formula $$\int_X^{G,O} fd\mu=G(m)$$ where $m$ is the function defined in Lemma 1 and  call it GO-integral 
\end{df}

We consider the defined integral as a functional on $C(X,[0,1])$, i.e. $I:C(X,[0,1])\to[0,1]$ is a functional defined by the formula  $I(f)=\int_X^{G,O}fd\mu$ for $f\in  C(X,[0,1])$.

\begin{theorem}\label{prop} The functional $I$ satisfies the following properties

\begin{enumerate}
\item $I(1_X)=1$;
\item $I(0_X)=0$;
\item $I(\varphi)\le I(\psi)$ for each functions $\varphi$, $\psi\in C(X,[0,1])$ such that $\varphi\le\psi$;

\end{enumerate}
\end{theorem}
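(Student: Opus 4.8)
The plan is to verify the three properties directly from the definition $I(f) = G(m_f)$, where $m_f(t) = O(\mu(f_t), t)$, by reducing each statement about $I$ to the corresponding defining property of the $\co$-general pseudo-grouping function $G$ (Definition \ref{CGPG}). The only nontrivial bookkeeping is identifying the function $m_f \in UC([0,1],[0,1])$ in each case; note that $m_f$ does lie in $UC([0,1],[0,1])$ by Lemma \ref{semi}, so $G(m_f)$ is well-defined.

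For property (1): when $f = 1_X$, we have $f_t = f^{-1}([t,1]) = X$ for every $t \in [0,1]$ (including $t=1$, since $1 \ge 1$), hence $\mu(f_t) = \mu(X) = 1$ and $m_{1_X}(t) = O(1,t)$. In particular $m_{1_X}(1) = O(1,1) = 1$ by condition 3 of Definition \ref{Ov}. So $m_{1_X}$ attains the value $1$ at $t=1$, and condition 2 of Definition \ref{CGPG} gives $G(m_{1_X}) = 1$, i.e. $I(1_X) = 1$.

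For property (2): when $f = 0_X$, we have $f_t = f^{-1}([t,1])$, which equals $X$ for $t = 0$ and equals $\emptyset$ for $t > 0$. Thus $m_{0_X}(0) = O(\mu(X), 0) = O(1,0) = 0$ by condition 2 of Definition \ref{Ov} (since $1 \cdot 0 = 0$), and $m_{0_X}(t) = O(\mu(\emptyset), t) = O(0,t) = 0$ for $t > 0$, again by condition 2 of Definition \ref{Ov}. Hence $m_{0_X} = 0_{[0,1]}$, and condition 1 of Definition \ref{CGPG} yields $G(m_{0_X}) = 0$, i.e. $I(0_X) = 0$.

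For property (3): suppose $\varphi \le \psi$ in $C(X,[0,1])$. Then for each $t$, $\varphi_t = \varphi^{-1}([t,1]) \subset \psi^{-1}([t,1]) = \psi_t$, so by monotonicity of $\mu$ (property 2 in the definition of capacity) we get $\mu(\varphi_t) \le \mu(\psi_t)$. Since $O$ is nondecreasing (condition 4 of Definition \ref{Ov}), $m_\varphi(t) = O(\mu(\varphi_t), t) \le O(\mu(\psi_t), t) = m_\psi(t)$ for all $t$, i.e. $m_\varphi \le m_\psi$. Then condition 3 of Definition \ref{CGPG} gives $G(m_\varphi) \le G(m_\psi)$, that is $I(\varphi) \le I(\psi)$. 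None of these steps presents a real obstacle; the proof is a routine chase through the definitions, the one mild subtlety being to remember that one must invoke Lemma \ref{semi} to know $m_f$ is a legitimate argument for $G$, and to handle the boundary value $t=1$ in property (1) and $t=0$ in property (2) correctly.
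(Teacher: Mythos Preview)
Your proof is correct and follows essentially the same approach as the paper's own proof: both verify each of the three properties by computing $m_f$ explicitly (or bounding it) and then invoking the corresponding clause of Definition~\ref{CGPG}, using the properties of $O$ from Definition~\ref{Ov} and the monotonicity of $\mu$. Your write-up is in fact slightly more explicit than the paper's (you cite condition~3 of Definition~\ref{Ov} for $O(1,1)=1$ and invoke Lemma~\ref{semi} for well-definedness), but the argument is the same.
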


\begin{proof} 1. We have $(1_X)_1=X$. Hence $m(1)=O(\mu(X),1)=1$. Then we obtain $G(m)=1$ by Property 2 from Definition \ref{CGPG}. 

2. We have $(0_X)_0=X$ and $(0_X)_t=\emptyset$ for each $t>0$. Hence $m(0)=O(\mu(X),0)=0$ and $m(t)=O(\mu(\emptyset),t)=O(0,t)=0$ for each $t>0$ by Property 2 from Definition \ref{Ov}. Then we obtain $G(m)=0$ by Property 1 from Definition \ref{CGPG}.

3. Consider any functions $\varphi$, $\psi\in C(X)$ such that $\varphi\le\psi$. The inequality $I(\varphi)\le I(\psi)$ follows from the obvious inclusion $\varphi_t\subset\psi_t$ and the monotonicity of the capacity $\mu$ and the functions $O$ and $G$.
\end{proof}

It is noteworthy that Theorem \ref{prop} implies, for finite $X$, that the GO-integral adheres to the definition of an aggregation function (see for example \cite{Overlap} for more information about aggregation functions).

One of the important problems of the fuzzy integrals theory is characterization of integrals as functionals on some function space (see for example subchapter 4.8 in \cite{Grab} devoted to characterizations of the Choquet integral and the Sugeno integral). Characterizations of t-normed integrals were discussed in \cite{CLM}, \cite{Rad}, \cite{R5} and \cite{R6}. Particulary,  the following characterization of t-normed integral was given in \cite{R5}: a   functional $\mu$ on  $C(X,[0,1])$ satisfies the following properties 

\begin{enumerate}
\item $\mu(1_X)=1$;
\item $\mu(\psi\vee\varphi)=\mu(\psi)\vee\mu(\varphi)$ for each comonotone functions $\varphi$, $\psi\in C(X,[0,1])$;
\item $\mu(c_X\ast\varphi)=c\ast\mu(\varphi)$ for each $c\in[0,1]$ and $\varphi\in C(X,[0,1])$.

\end{enumerate} 
 
if and only if there exists a unique capacity $\nu\in MX$ such that $\mu$ is the t-normed integral with respect to $\nu$.

\begin{problem} Can we extend the characterization provided above to formulate an appropriate description for the GO-integral? Specifically, does the GO-integral adhere to some generalized versions of the second and third properties?
\end{problem}

\end{document}